\newtheorem{theorem}{Theorem}[section]
\newtheorem{lemm}[theorem]{Lemma}
\newtheorem{theo}[theorem]{Theorem}
\theoremstyle{definition}
\newtheorem{defi}[theorem]{Definition}
\theoremstyle{remark}
\numberwithin{equation}{section}
\def\frak{\mathfrak}
\def\om{\omega}
\def\om{\omega}
\newfont{\df}{eufm10}
\def\ep{\epsilon}
\def\om{\omega}
\def\al{\alpha}
\begin{document}

\title[Fermionic
realization of $U_{r,s}({C}_n^{(1)})$]
    {Fermionic
realization of two-parameter quantum affine algebra
$U_{r,s}({C}_l^{(1)})$ }

\author[Jing]{Naihuan Jing}
\address{School of Mathematical Sciences, South China University of Technology, Guangzhou 510640, China
and Department of Mathematics,
   North Carolina State University,
   Raleigh, NC 27695, USA}
\email{jing@math.ncsu.edu}

\author[Zhang]{Honglian Zhang$^{\star}$}
\address{Department of Mathematics,
Shanghai University, Shanghai 200444, China}
\email{hlzhangmath@shu.edu.cn}
\thanks{$^\star$H. Zhang, Corresponding Author}

\subjclass[2010]{Primary 17B37}

\keywords{Two-parameter quantum affine algebra, Young diagram, Fock
space, fermionic realization. }
\begin{abstract}  We construct a Fock space representation and
the action of  the two-parameter quantum algebra
$U_{r,s}(\frak{gl}_{\infty})$ using extended Young diagrams. In
particular, we obtain an integrable representation of the two-parameter
quantum affine algebra of type $C_n^{(1)}$ which is a two-parameter
generalization of Kang-Misra-Miwa's realization.
\end{abstract}

\maketitle

{\bf R\'esum\'e}. Nous construisons une repr\'esentation de l'espace de Fock et
l'action de la 2-param\`etre quantique alg\`ebre $U_{r,s}(\frak{gl}_{\infty})$
en  utilisant diagrammes de Young prolong\'ees. Dans
particulier, on obtient une repr\'esentation int\'egrable de la 2-param\`etres
quantique alg\`{e}bre affine pour le type $C_n^{(1)}$ qui est un 
2-param\`{e}tres
g\'en\'eralisation de la r\'{e}alisation de Kang-Misra-Miwa.

\section{ Introduction}\

Quantum groups, introduced independently by Drinfeld \cite{Dr} and
Jimbo \cite{Jb}, are deformations of the universal enveloping algebras
of the Kac-Moody Lie algebras. Among the most important classes of quantum
groups, quantum affine algebras have a rich representation theory
and broad applications in mathematics and physics. In
particular they are expected to provide the mathematical foundation
for $q$-conformal field theory.

Two-parameter quantum groups associated to ${\frak{gl_n}}$ and
${\frak{sl_n}}$ were studied in \cite{BW1,BW2, BW3} by Benkart and
Witherspoon (see also earlier work by Takeuchi \cite{T}). Other
classical types and some exceptional types of two-parameter quantum
groups and their representations have been investigated in
\cite{BGH1, BGH2, HS} (see references therein). The two-parameter
quantum affine algebras were introduced in \cite{HRZ} and their
Drinfeld realization and vertex operator representations were also
known with help of Lyndon bases for type $A$. More recently
these structures have been generalized to all classical untwisted types in
\cite{HZ1, GHZ}, which are analog of the basic representations of
the quantum affine algebras \cite{FJ}. The latter builds upon
certain quantization of the so-called bosonic fields. From the other angle
aimed toward a categorification, \cite{JZ2} provided a
group-theoretic realization of two-parameter quantum toroidal
algebras using finite subgroups of $SL_2(\mathbb{C})$ via the McKay
correspondence.

It is well known that quantum affine algebras also admit fermionic
realizations \cite{H, MM, KMM, LT} that have played an important role in
integrable systems and representation theory. In \cite{JZ1} such a
fermionic realization of the two-parameter quantum affine algebra was
constructed for type $A$ using Young diagrams. The combinatorial
model gives rise to a natural interpretation of the deforming parameters $r$
and $s$. In this paper, we construct a fermionic
realization of the two-parameter quantum affine algebra of type
$C$ along the same line. We have taken a slightly different presentation from
\cite{JZ1} to use the approach of Kang-Misra-Miwa \cite{KMM}. We expect that
this model will also work for other 2-parameter twisted quantum affine algebras.

\section{The Fock Space of  $U_{r, s}(gl(\infty))$}
In this section, we first define the two-parameter quantum algebra
$U_{r, s}(gl(\infty))$, and obtain an
 irreducible integrable representation using extended Young diagrams.

Let $\{ \ep_i,
|i\in \mathbb{Z} \}$  be an orthonormal basis of a Euclidean space $E$
with an inner product $(\,,)$.
Let $\{\alpha_i|i \in \mathbb{Z}\}$ be the simple roots of the Lie algebra $\mathfrak{g}=gl(\infty)$.

We assume that the ground field $\mathbb{K}$ is the field
$\mathbb{Q}(r,s)$ of rational functions in $r, s$.
Similar to the definition of $U_{r, s}(gl_n)$ (cf. \cite{BW1}), we
define $U_{r, s}(gl(\infty))$ as follows.
\begin{defi} Let  $U_{r, s}(gl(\infty))$ be the unital associative algebra over $\mathbb{K}$ generated by
the elements $e^{\infty}_i, f^{\infty}_i, \omega^{\infty}_i,
{\omega'}^{\infty}_i$ for $i \in \mathbb{Z}$ satisfying the
following defining relations:
\begin{eqnarray*}
&(R1) \quad &  {(\omega^{\infty}_i)}^{\pm 1},
{({{\omega'}^{\infty}_j})}^{\pm 1}~~\hbox{all commute with each another and}\\
&&~~~~~~~~~~~~{\omega^{\infty}_i}{({\omega^{\infty}_i})}^{-1}={{\omega'}^{\infty}_j}{({{\omega'}^{\infty}_j})}^{-1}=1, \\
&(R2) \quad &  {\omega_i^{\infty}}
{e^{\infty}_j}=r^{(\varepsilon_{i},\,\alpha_j)}\, {e^{\infty}_j}
{\omega_i^{\infty}} \qquad ~~~~\hbox{and}~~~~ {\omega_i^{\infty}}
{f^{\infty}_j}=r^{-(\varepsilon_{i},\,\alpha_j)} \,{f^{\infty}_j}
{\omega_i^{\infty}},\\
&(R3) \quad &  {{\omega'}^{\infty}_i}
{e^{\infty}_j}=s^{(\varepsilon_{i},\,\alpha_j)}\, {e^{\infty}_j}
{{\omega'}^{\infty}_i} \qquad~~~~\hbox{and}~~~~
{{\omega'}^{\infty}_i}
{f^{\infty}_j}=s^{-(\varepsilon_{i},\,\alpha_j)}\, {f^{\infty}_j}
{{\omega'}^{\infty}_i},\\
&(R4) \quad & [\,{e^{\infty}_i},\,
{f^{\infty}_j}\,]=\frac{\delta_{ij}}{r-s}({\omega^{\infty}_i{\omega'}^{\infty}_{i+1}}-{\omega^{\infty}_{i+1}{\omega'}^{\infty}_i}),\\
&(R5) \quad &[\,{e^{\infty}_i},\,
{e^{\infty}_j}\,]=[\,{f^{\infty}_i},\,
{f^{\infty}_j}\,]=0~~~\hbox{if}~~|i-j|>1,\\
&(R6) \quad & {({e^{\infty}_i})}^2{e^{\infty}_{i+1}}-(r+s)\,{e^{\infty}_i}{e^{\infty}_{i+1}}{e^{\infty}_i}
+rs \,{e^{\infty}_{i+1}}{({e^{\infty}_{i}})}^2=0,\\
& &e^{\infty}_i{(e^{\infty}_{i+1})}^2-(r+s)\,e^{\infty}_{i+1}e^{\infty}_{i}e^{\infty}_{i+1}
+ rs \,{(e^{\infty}_{i+1})}^2e^{\infty}_{i}=0,\\
&(R7) \quad & {(f^{\infty}_i)}^2f^{\infty}_{i+1}-(r^{-1}+s^{-1})\,f^{\infty}_i f^{\infty}_{i+1} f^{\infty}_i
+{(rs)}^{-1}\, f^{\infty}_{i+1}{(f^{\infty}_{i})}^2=0,\\
&
&f^{\infty}_i{(f^{\infty}_{i+1})}^2-(r^{-1}+s^{-1})\,f^{\infty}_{i+1}f^{\infty}_{i}f^{\infty}_{i+1}+
{(rs)}^{-1} {(f^{\infty}_{i+1})}^2f^{\infty}_{i}=0.
\end{eqnarray*}
\end{defi}
%

Now we construct a Fock space representation for the two-parameter quantum algebra
$U_{r,s}(\frak{gl}_{\infty})$, which generalizes the fermionic representation
of the usual quantum algebra given in \cite{KMM}.

We begin with the definition of extended Young diagram given in \cite{JMMO}.
\begin{defi} An extended Young diagram $Y$ is a sequence $(y_k)_{k\geq
0}$ such that
\begin{eqnarray*} &&(i)\quad y_k\in \mathbb{Z},\quad y_k\leq y_{k+1} ~~\hbox{for~~ all} ~~k,\\
&&(ii)\quad \hbox{there~~ exists~~ fixed~~ integer}~~ y_{\infty}~~
\hbox{such~~ that}~~ y_k=y_\infty ~~\hbox{for}~~ k\gg 0.
\end{eqnarray*}
The integer $y_{\infty}$ is called the charge of $Y$.
\end{defi}

%
%
%
%


 Another way to
identify an extended Young diagram is by specifying the fourth quadrant of the xy-plane with sites
$\{(i,j)\in \mathbb{Z}\times \mathbb{Z}|i\geq 0,\, j\leq 0\}$.
Thus an extended Young diagram $Y=(y_k)_{k\geq 0}$ is an infinite Young diagram
 drawn on the lattice in the right half plane with sites $\{(i,j)\in \mathbb{Z}\times \mathbb{Z}|i\geq 0,\, j\leq 0\}$, where $y_k$ denotes the ``depth'' of the $k$-th column.

Note that if $y_k\neq y_{k+1}$ for some $k$, then we will have
corners in the extended Young diagram $Y=(y_k)_{k\geq o}$.
A corner is either ``concave'' or ``convex''. A corner located at site $(i,
j)$ is called a $d-$diagonal corner (or corner with diagonal number
d), where $d=i+j$. For more details please see \cite{JMMO} and \cite{KMM}.

For any fixed integer $n$, let $\phi_n$ denote the ``empty'' diagram
$(n, n, n, \cdots )$ of charge $n$. Let $Y_n$  denote the set of all
extended Young diagrams of charge $n$. The Fock space of charge $n$
$$\mathscr{F}_n=\bigoplus_{Y\in y_n}\mathbf{Q}(r, s)Y$$
denotes the $\mathbf{Q}(r, s)$-vector space having all $Y\in Y_n$ as
base vectors.

The algebra $U_{r, s}(gl(\infty))$ acts on the Fock space as
follows:
\begin{theo} $\mathscr{F}_n$
is an irreducible integrable $U_{r, s}(gl(\infty))$- module under the
action defined as follows. For $Y\in y_n$,
\begin{eqnarray*}
{e^{\infty}_i}Y&=Y',  \qquad & \hbox{if Y has an i-diagonal convex corner then},\\
&& \hbox{$Y'$ is the same as Y except that the $i$-diagonal }\\
&& \hbox{convex corner is replaced by a concave corner,}\\
&=0, \qquad & otherwise;\\
{f^{\infty}_i}Y&=Y'',  \qquad & \hbox{if Y has an i-diagonal concave corner then},\\
&& \hbox{$Y''$ is the same as Y except that the $i$-diagonal }\\
&& \hbox{concave corner is replaced by a convex corner,}\\
&=0, \qquad & otherwise;\\
{\omega^{\infty}_i}Y &=s^{-1}Y,  \qquad & \hbox{if Y has an i-diagonal concave corner },\\
&=r^{-1}, \qquad & \hbox{if $Y$ has an i-diagonal convex corner },\\
&=Y, \qquad & otherwise;\\
{{\omega'}^{\infty}_i}Y&=rY,  \qquad & \hbox{if Y has an i-diagonal concave corner },\\
&=s, \qquad & \hbox{if $Y$ has an i-diagonal convex corner },\\
&=Y, \qquad & otherwise.
\end{eqnarray*}
\end{theo}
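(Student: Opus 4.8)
The plan is to work entirely on the basis $\{Y\mid Y\in Y_n\}$ of $\mathscr{F}_n$ and to exploit that all four operators modify a diagram only locally, near the diagonal carrying their index. First I would record the combinatorial facts that make the action well defined: on a fixed diagonal $i$ an extended Young diagram has at most one convex and at most one concave corner, and adding (resp.\ removing) the box attached to such a corner yields again an extended Young diagram of the same charge $n$. Thus $e^{\infty}_i,f^{\infty}_i$ send each basis vector to a basis vector or to $0$, while $\omega^{\infty}_i,{\omega'}^{\infty}_i$ act diagonally. I would then pass to the fermionic (Maya diagram) encoding of $Y$, in which $e^{\infty}_i$ and $f^{\infty}_i$ are the two directions of a single interchange of the occupancies on the two half-integer sites straddling the integer $i$. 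Two consequences are immediate and organize the whole argument: (i) each $Y$ is a joint eigenvector of all the $\omega^{\infty}_i,{\omega'}^{\infty}_i$, and these eigenvalues record the occupancy data of $Y$, so distinct diagrams of charge $n$ have distinct weights; and (ii) $(e^{\infty}_i)^2=(f^{\infty}_i)^2=0$, while the operators of index $i$ and $j$ with $|i-j|>1$ act on disjoint sites.

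Next I would verify the defining relations on an arbitrary $Y$. Relation (R1) is immediate since the $\omega$'s are simultaneously diagonal, and (R5) follows from the disjointness in (ii). The Serre relations (R6),(R7) hold term by term: $(e^{\infty}_i)^2=0$ kills the outer monomials, and tracking the intermediate occupancies shows $e^{\infty}_ie^{\infty}_{i+1}e^{\infty}_i=0=e^{\infty}_{i+1}e^{\infty}_ie^{\infty}_{i+1}$ --- after the middle operator the outer one no longer finds the corner it needs --- so every monomial vanishes separately and the coefficients $r+s,\,rs$ play no role; (R7) is identical. For (R2),(R3) I would compare, on each $Y$, the $\omega^{\infty}_i$-eigenvalue of $Y$ with that of $e^{\infty}_jY$ (resp.\ $f^{\infty}_jY$): the interchange at $j$ alters the occupancies, hence the $\omega$-eigenvalues, only at the sites it touches, and the set of indices $i$ at which a change occurs must coincide with the support $\{i:(\varepsilon_i,\alpha_j)\neq0\}$; matching the ratio of eigenvalues with $r^{(\varepsilon_i,\alpha_j)}$ (resp.\ $s^{(\varepsilon_i,\alpha_j)}$) across the finitely many local configurations completes these relations.

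The core of the proof is (R4). On each $Y$ the left-hand side is computed combinatorially: $e^{\infty}_if^{\infty}_i$ is the identity on the diagrams with a concave $i$-corner and $0$ elsewhere, $f^{\infty}_ie^{\infty}_i$ is the identity on those with a convex $i$-corner and $0$ elsewhere, so $[e^{\infty}_i,f^{\infty}_i]Y$ equals $+Y$, $-Y$ or $0$ according as diagonal $i$ carries a concave corner, a convex corner, or neither. One must then check that $\tfrac{1}{r-s}(\omega^{\infty}_i{\omega'}^{\infty}_{i+1}-\omega^{\infty}_{i+1}{\omega'}^{\infty}_i)$ has exactly these eigenvalues. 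I expect this to be the main obstacle: in contrast with the one-parameter case, the right-hand side is built from Cartan operators living on the two adjacent diagonals $i$ and $i+1$, whereas the left-hand side depends only on the corner at $i$. The verification therefore rests on a cancellation forcing the contribution of diagonal $i+1$ to drop out and leaving precisely the normalizing factor $r-s$; it is this requirement that pins down the eigenvalue assignments of $\omega^{\infty}_i$ and ${\omega'}^{\infty}_i$. For $i\neq j$ both composites in $[e^{\infty}_i,f^{\infty}_j]$ act on separated sites, so the commutator vanishes, matching the $\delta_{ij}$.

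It remains to establish integrability and irreducibility. Integrability is immediate from (ii): the $e^{\infty}_i,f^{\infty}_i$ are locally nilpotent and $\mathscr{F}_n$ is the direct sum of its one-dimensional weight spaces. For irreducibility I would use the distinctness of weights from (i): a nonzero submodule $M$ is stable under the commuting diagonalizable $\omega^{\infty}_i$ whose joint eigenvalues separate the basis, so $M$ is the span of the diagrams it contains. Given any $Y\in M$, every diagram other than $\phi_n$ has a convex corner whose removal keeps us among basis vectors, so finitely many suitable $e^{\infty}_i$ strip $Y$ down to $\phi_n\in M$. Conversely every $Y\in Y_n$ is built from $\phi_n$ by successively filling addable corners, i.e.\ $Y=f^{\infty}_{i_1}\cdots f^{\infty}_{i_k}\phi_n$ with coefficient $1$; hence $\mathscr{F}_n=U_{r,s}(gl(\infty))\,\phi_n\subseteq M$ and $M=\mathscr{F}_n$.
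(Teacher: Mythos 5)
Your overall strategy---direct verification of (R1)--(R7) on the basis of extended Young diagrams, Serre relations holding monomial by monomial, and irreducibility via weight separation plus connectivity of every diagram to $\phi_n$---is exactly the approach the paper takes (its entire proof is the one-sentence assertion that the verification is straightforward and parallel to type $A$), and most of your outline is sound: each diagonal carries at most one corner, $(e^{\infty}_i)^2=e^{\infty}_ie^{\infty}_{i\pm1}e^{\infty}_i=0$ on $\mathscr{F}_n$ does kill every monomial of (R6)--(R7) separately, operators of distant index touch disjoint sites, and the argument that a nonzero submodule contains some $Y$, hence $\phi_n$, hence everything, is correct.

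The genuine gap is that the one computation you defer---that $\frac{1}{r-s}\bigl(\omega^{\infty}_i{\omega'}^{\infty}_{i+1}-\omega^{\infty}_{i+1}{\omega'}^{\infty}_i\bigr)$ has eigenvalues $1,-1,0$ on the concave/convex/no-corner diagrams, and that the $\omega$-eigenvalue ratios match $r^{(\varepsilon_i,\alpha_j)}$ and $s^{(\varepsilon_i,\alpha_j)}$---is the entire analytic content of the theorem, and with the eigenvalues exactly as displayed in the statement it does not close. Take $Y=\phi_n$, whose only corner is the concave one on diagonal $n$: then $[e^{\infty}_n,f^{\infty}_n]\phi_n=\phi_n$, while $\omega^{\infty}_n\phi_n=s^{-1}\phi_n$, ${\omega'}^{\infty}_n\phi_n=r\phi_n$ and $\omega^{\infty}_{n+1},{\omega'}^{\infty}_{n+1}$ act by $1$, so the Cartan side of (R4) gives $\frac{s^{-1}-r}{r-s}\phi_n\neq\phi_n$. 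Similarly (R2) fails for $j=i+1$: in your Maya encoding $e^{\infty}_{i+1}$ sends the local pattern $(0,1,0)$ to $(0,0,1)$ and thereby destroys the concave corner on diagonal $i$, so a corner-based eigenvalue of $\omega^{\infty}_i$ is not preserved even though $(\varepsilon_i,\alpha_{i+1})=0$; your assertion that the set of indices where the eigenvalue changes coincides with $\{i:(\varepsilon_i,\alpha_j)\neq0\}$ holds only if the eigenvalue of $\omega^{\infty}_i$ is a function of the \emph{single} boundary edge $m_i$, not of the corner on diagonal $i$, which involves the pair $(m_i,m_{i+1})$. Your own constraints in fact force, for single-site eigenvalues $\lambda(m_i),\mu(m_i)$ of $\omega^{\infty}_i,{\omega'}^{\infty}_i$, the relations $\lambda(0)=r\lambda(1)$, $\mu(0)=s\mu(1)$, $\lambda(1)\mu(1)=1$, and with such an assignment (e.g.\ $\omega^{\infty}_iY=rY$, ${\omega'}^{\infty}_iY=sY$ when edge $i$ is unoccupied, both acting by $1$ otherwise) all of (R2)--(R4) are verified. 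So you must actually carry the eigenvalue check to this point and record the resulting assignment; as written, the ``cancellation forcing the contribution of diagonal $i+1$ to drop out'' is postulated rather than proved, and for the action as literally stated it does not occur.
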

\begin{proof} It is straightforward to verify the relations $(R1)-(R7)$ for the action
on $\mathscr{F}_n$ for all generators. We remark that this is very much the same as
in type $A$ situation \cite{JZ1}.

\end{proof}

\section{Fock Space Representations of  $U_{r,s}({C}_l^{(1)})$}
\medskip

Having constructed the Fock space representation of the two-parameter quantum affine algebra
$U_{r, s}(gl(\infty))$, we can build the Fock space representation of $U_{r,s}({C}_l^{(1)})$
by generalizing the well--known embedding of the latter inside $U_{r, s}(gl(\infty))$.  First let us recall the definition of the two-parameter
quantum affine algebra $U_{r,s}({C}_l^{(1)})$ from \cite{GHZ}.

 Let $I_0=\{0,\,1,\,2,\,\cdots, n\}$, and $(a_{ij}),\, i,\,j\in I_0$ be the Cartan matrix  of type $C_l^{(1)}$.
 We take the normalization $(\alpha_0,\, \alpha_0)=(\alpha_l,\,
\alpha_l)=1$ and $(\alpha_i,\, \alpha_i)=\frac{1}{2}$ for $1\leq i\leq l-1$.
Let $r_i=r^{\frac{(\al_i, \al_i)}{2}}$ and
$s_i=s^{\frac{(\al_i, \al_i)}{2}}$.
Denote by $c$ the canonical central element of $\mathfrak{g}(C_l^{(1)})$ and let $\delta_{ij}$ denote the Kronecker symbol.

\begin{defi} {\it The {\em two-parameter quantum affine algebra
 $U_{r,s}(\mathrm{C}_n^{(1)})$} is the unital associative
algebra over $\mathbb{K}$ generated by the elements $e_j,\, f_j,\,
\omega_j^{\pm 1},\, \omega_j'^{\,\pm 1}\, (j\in I_0),\,
\gamma^{\pm\frac{1}2},\,\gamma'^{\pm\frac{1}2},$ $D^{\pm1},
D'^{\,\pm1}$, satisfying the following relations:

\noindent $(\hat{C}1)$
$\gamma^{\pm\frac{1}2},\,\gamma'^{\pm\frac{1}2}$ are central with
$\gamma=\om_\delta$, $\gamma'=\om'_\delta$, $\gamma\gamma'=(rs)^c$, such
that $\omega_i\,\omega_i^{-1}=\omega_i'\,\omega_i'^{\,-1}=1
=DD^{-1}=D'D'^{-1}$, and
\begin{equation*}
\begin{split}[\,\omega_i^{\pm 1},\omega_j^{\,\pm 1}\,]&=[\,\omega_i^{\pm1},
D^{\pm1}\,]=[\,\omega_j'^{\,\pm1}, D^{\pm1}\,] =[\,\omega_i^{\pm1},
D'^{\pm1}\,]=0\\
&=[\,\omega_i^{\pm 1},\omega_j'^{\,\pm 1}\,]=[\,\om_j'^{\,\pm1},
D'^{\pm1}\,]=[D'^{\,\pm1}, D^{\pm1}]=[\,\omega_i'^{\pm
1},\omega_j'^{\,\pm 1}\,].
\end{split}
\end{equation*}
$(\hat{C}2)$ \ \textit{For} $\,0 \leqslant i,\, j \leqslant l$,
\begin{equation*}\begin{array}{lll}
&D\,e_i\,D^{-1}=r_i^{\delta_{0i}}\,e_i,\qquad\qquad\quad
&D\,f_i\,D^{-1}=r_i^{-\delta_{0i}}\,f_i,\\
&\omega_je_i\omega_j^{-1}=\langle i,\, j \rangle\,e_i,~~~~~~~~~~~
&\omega_jf_i\omega_j^{-1}=\langle j,\, i \rangle^{-1}\,f_i.
\end{array}\end{equation*}
$(\hat{C}3)$ \ \textit{For} $\,0 \leqslant i,\, j \leqslant l$,
\begin{equation*}\begin{array}{lll}
 &D'\,e_i\,D'^{-1}=s_i^{\delta_{0i}}\,e_i,\qquad\qquad\quad
&D'\,f_i\,D'^{-1}=s_i^{-\delta_{0i}}\,f_i,\\
&\omega'_je_i{\omega'}_j^{-1}=\langle i,\, j
\rangle^{-1}\,e_i,~~~~~~~~~~~ &\omega'_jf_i{\omega'}_j^{-1}=\langle
j,\, i \rangle\,f_i.
\end{array}\end{equation*}
$(\hat{C}4)$ \ \textit{For} $\,0 \leqslant i,\, j \leqslant l$,
 $$[\,e_i, f_j\,]=\frac{\delta_{ij}}{r_i-s_i}(\omega_i-\omega'_i).$$
$(\hat{C}5)$
  \textit{For all}  $1\leqslant i\ne j\leqslant l$ \textit{but}
  $(i, j)\not\in\{(0, l), (l, 0)\}$ \textit{such that} $a_{ij}=0$,
 $$[e_i, e_j]=[f_i, f_j]=0,$$
 $$e_le_0=rs\,e_0e_l,\qquad f_0f_l=rs\,f_lf_0.$$
$(\hat{C}6)$ \ \textit{For} $1\leqslant i\leqslant l-2$,
\textit{~the }$(r,s)$-Serre \textit{relations for $e_i's$:}
\begin{gather*}
e_0^2e_{1}-(r{+}s)\,e_0e_{1}e_0+rs\,e_{1}e_0^2=0,\\
e_i^2e_{i+1}-(r_i{+}s_i)\,e_ie_{i+1}e_i+(r_is_i)\,e_{i+1}e_i^2=0,\\
e_{i+1}^2e_i-(r_{i+1}^{-1}{+}s_{i+1}^{-1})\,e_{i+1}e_ie_{i+1}+
(r_{i+1}^{-1}s_{i+1}^{-1})\,e_ie_{i+1}^2=0,\\
e_l^2e_{l-1}-(r^{-1}{+}s^{-1})\,e_le_{l-1}e_l+(r^{-1}s^{-1})\,e_{l-1}e_l^2=0,\\
e_{l-1}^3e_l-(r{+}(rs)^{\frac{1}{2}}{+}s)\,e_{l-1}^2e_le_{l-1}\\
+(rs)^{\frac{1}{2}}\,
(r{+}(rs)^{\frac{1}{2}}{+}s)\,e_{l-1}e_le_{l-1}^2-
(rs)^{\frac{3}{2}}\,e_le_{l-1}^3=0\\
e_{1}^3e_0-(r^{-1}{+}(rs)^{-\frac{1}{2}}{+}s^{-1})\,e_{1}^2e_0e_{1}\\
+(rs)^{-\frac{1}{2}}\,
(r^{-1}{+}(rs)^{-\frac{1}{2}}{+}s^{-1})\,e_{1}e_0e_{1}^2-
(rs)^{-\frac{3}{2}}\,e_0e_{1}^3=0.
\end{gather*}
 $(\hat{C}7)$ \ \textit{For $1\leqslant i\leqslant l-2$,
~the $(r,s)$-Serre relations for $f_i's$  are obtained from $(\hat{C}6)$ by
replacing $e_i$ for $f_i$ and $r, s$ by $r^{-1}, s^{-1}$ respectively.}


In the above $\langle i,\, j\rangle$ are the matrix entries of the
two-parameter quantum Cartan matrix
 for type $\mathrm{C}_{l}^{(1)}$:
$$\left(\begin{array}{cccccc}
rs^{-1}& r^{-1}& 1 & \cdots & 1 & rs \\
s & r^{\frac{1}{2}}s^{-\frac{1}{2}} & r^{-\frac{1}{2}}  & \cdots & 1 & 1\\
\cdots &\cdots &\cdots & \cdots & \cdots & \cdots\\
1 & 1 & 1  & \cdots & r^{\frac{1}{2}}s^{-\frac{1}{2}} & r^{-1}\\
 (rs)^{-1} & 1 & 1 & \cdots & s & rs^{-1}
\end{array}\right)$$}
\end{defi}

 We now desribe the integrable representation of the two-parameter quantum affine
 algebra $U_{r,s}({C}_l^{(1)})$. We start with the folding map
$$\pi: \qquad \{0,\,1,\cdots, 2l-1 \}\rightarrow  \{0,\,1,\cdots, l
\}$$ where $\pi(0)=0, \pi(l)=l$ and $\pi(i)=\pi(2l-i)=i$ for $1 \leq i
\leq l-1$. Extend $\pi$ to a map from $\mathbb{Z}$ into $\{0,\,1,\cdots, l \}$
by periodicity $2l$.

For any $Y=(y_k)_{k\geq 0}\in Y_n$ define the
operators:
$$t_kY=r^aY, \qquad t'_k=s^a Y$$
where $a=|\{p\in\mathbb{Z}|y_k<p\leq n \quad \hbox{and}\quad
\pi(p+k)=0\}|$ which depends on k.

As we still act on the space $\mathscr{F}_n$, so we continue to use the same notation for the new Fock space representation. The following theorem is proved by direct verification (see \cite{JZ1}).

\begin{theo}
For $k=0,\,1,\,\cdots, l$, the algebra $U_{r,s}(\mathrm{C}_l^{(1)})$
acts on $\mathscr{F}_n$ by the equations:
\begin{eqnarray}
&e_i= \sum\limits_{\substack{j\\ \pi(j)=i}} \Big( \prod\limits_{\substack{k>j\\
\pi(k)=i}}{\omega}_{k}^{\infty}\Big)^{(\alpha_i,\,\alpha_i)}e_j^{\infty},\\
&f_i=\sum\limits_{\substack{j\\ \pi(j)=i}}f_j^{\infty}\Big(
\prod\limits_{\substack{k<j\\
\pi(k)=i}}{{{\omega'}_k^{\infty}}}\Big)^{(\alpha_i,\,\alpha_i)},\\
&\omega_i=\Big( \prod\limits_{\substack{j\\
\pi(j)=i}}\omega_{j}^{\infty}\Big)^{(\alpha_i,\,\alpha_i)},\\
&\omega'_i=\Big( \prod\limits_{\substack{j\\
\pi(j)=i}}{\omega'}_{j}^{\infty}\Big)^{(\alpha_i,\,\alpha_i)},\\
&D=\prod\limits_{k\geq 0}t_k, \qquad D'=\prod\limits_{k\geq 0}t'_k.
\end{eqnarray}
Under the above action $\mathscr{F}_n$ is an integrable
$U_{r,s}(\mathrm{C}_l^{(1)})$-module.
\end{theo}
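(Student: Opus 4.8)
The plan is to deduce all the defining relations $(\hat C1)$--$(\hat C7)$, together with integrability, directly from the $U_{r,s}(gl(\infty))$-module structure on $\mathscr{F}_n$ established in Section 2, exploiting that every affine generator is an explicit $\pi$-folded combination of the $gl(\infty)$ generators $e_j^\infty,f_j^\infty,\omega_j^\infty,{\omega'}_j^\infty$. I would organize the verification by increasing difficulty, isolating the genuine content at the two folding-fixed nodes $i=0$ and $i=l$, where $\pi$ identifies a pair of $gl(\infty)$ simple roots.

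First I would dispose of the Cartan and grading relations. Relation $(\hat C1)$ is immediate: since all ${\omega'}_k^\infty,\omega_k^\infty$ commute by $(R1)$, so do the finite products defining $\omega_i,\omega_i'$, while $t_k,t_k'$ act as scalars on each diagram, so $D,D'$ commute with everything in sight. For $(\hat C2)$--$(\hat C3)$ I would conjugate $e_i$ (resp.\ $f_i$) by $\omega_j$ (resp.\ $\omega_j'$) termwise via $(R2)$--$(R3)$: conjugating $e_m^\infty$ by the product $\prod_{\pi(k)=j}\omega_k^\infty$ collects a product of powers $r^{(\varepsilon_k,\alpha_m)}$ that assembles exactly into the quantum Cartan entry $\langle i,j\rangle$ recorded in the displayed matrix, and the $D$-action reproduces the grading scaling $r_i^{\delta_{0i}}$ through the defining count in $t_k$. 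The $\delta_{ij}$-commutator $(\hat C4)$ is the first substantive step: expanding $[e_i,f_j]$ as a double sum over the fibers $\pi^{-1}(i),\pi^{-1}(j)$, the bare commutators $[e_m^\infty,f_{m'}^\infty]$ vanish for $m\neq m'$ by $(R4)$--$(R5)$ and the group-like prefactors contribute only canceling scalars, so the whole expression vanishes when $i\neq j$; for $i=j$ the surviving diagonal terms telescope, the $\omega^\infty,{\omega'}^\infty$ prefactors combining through $(R4)$ into $(\omega_i-\omega_i')/(r_i-s_i)$, with the exponents $(\alpha_i,\alpha_i)$ reconciling the $r-s$ of $(R4)$ against the $r_i-s_i$ of $(\hat C4)$.

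Next I would treat the Serre-type relations $(\hat C5)$--$(\hat C7)$. For a pair of adjacent nodes neither of which is a folding-fixed point, the relevant $e_j^\infty$ are governed by a single $gl(\infty)$ edge and the quadratic $(r,s)$-Serre relations descend verbatim from $(R6)$--$(R7)$; likewise $[e_i,e_j]=0$ for $a_{ij}=0$ follows from $(R5)$ once one checks that the fibers are separated by more than one index. The real work, which I expect to be the main obstacle, is concentrated at the ends $i=0$ and $i=l$: here the cubic Serre relations with coefficients $r+(rs)^{1/2}+s$ and $(rs)^{3/2}$ must be produced by folding two adjacent copies of the $gl(\infty)$ generators, carefully tracking the $\omega^\infty$-prefactors, the short-node inner products $(\alpha_i,\alpha_i)=\tfrac12$ against $(\alpha_0,\alpha_0)=(\alpha_l,\alpha_l)=1$, and the resulting half-integer powers of $rs$. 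The anomalous commutation $e_le_0=rs\,e_0e_l$ (and its $f$-analogue) reflects that the two fixed nodes sit at antipodal positions of the folded diagram, and I would verify it by direct reordering of the two commuting families of single-corner operators.

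Finally, integrability follows from the weight-space decomposition of $\mathscr{F}_n$ under the mutually commuting $\omega_i,\omega_i',D,D'$ together with the local nilpotency of each $e_i,f_i$: since every affine generator is a finite sum of single-corner operators and any $Y\in Y_n$ differs from the empty diagram $\phi_n$ in only finitely many columns, only finitely many summands act nontrivially on a given vector, and the nilpotency analysis reduces to the $gl(\infty)$ case of Section 2. This establishes that $\mathscr{F}_n$ is an integrable $U_{r,s}(\mathrm{C}_l^{(1)})$-module under the stated action.
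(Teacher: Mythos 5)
Your overall strategy --- direct verification of $(\hat{C}1)$--$(\hat{C}7)$ by expanding each folded generator over its fiber $\pi^{-1}(i)$ and reducing to the $U_{r,s}(gl(\infty))$ relations on $\mathscr{F}_n$ --- is the same as the paper's, and your outlines of the Cartan relations, of $(\hat{C}4)$ (off-diagonal cancellation plus telescoping of the diagonal terms into $(\omega_i-\omega_i')/(r_i-s_i)$), of $e_le_0=rs\,e_0e_l$ by reordering, and of integrability via local nilpotency all match what is actually done.

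The gap is in your treatment of the Serre relations. You assert that for adjacent nodes away from the ends the quadratic $(r,s)$-Serre relations ``descend verbatim from $(R6)$--$(R7)$.'' They do not. For $1\leq i\leq l-1$ the fiber is $\pi^{-1}(i)=(\{i\}\cup\{2l-i\})+2l\mathbb{Z}$, so expanding $e_i^2e_{i+1}-(r_i+s_i)e_ie_{i+1}e_i+r_is_i\,e_{i+1}e_i^2$ gives a triple sum over $j,k\in\pi^{-1}(i)$ and $m\in\pi^{-1}(i+1)$. The diagonal terms $j=k$ vanish for the trivial reason that $(e_j^\infty)^2=0$ on the Fock space (the paper's Lemma 3.4), so $(R6)$ is never actually the operative mechanism; the entire content sits in the cross terms with $j\neq k$, where $e_j^\infty$ and $e_k^\infty$ commute and the required cancellation comes only from the scalars picked up when $e_j^\infty$ is moved past the group-like prefactors $\prod_{k'>k,\,\pi(k')=i}\omega_{k'}^\infty$. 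That is exactly what the paper's Lemmas 3.4--3.8 and the case decomposition of the sum (the regions $m\gg j>k$, $m=j+1\gg k$, $j\gg m=k+1$, etc.) are for, and it is the same bookkeeping you reserve only for the two cubic relations. A related misstatement: the folding doubles the fibers at the \emph{middle} nodes $1,\dots,l-1$, while $0$ and $l$ have singleton fibers per period; the cubic relations are delicate not because $\pi$ identifies a pair of roots at $0$ and $l$ but because they pit the third power of a doubled node against a singleton long node, with the half-integer powers of $rs$ emerging from the exponents $(\alpha_i,\alpha_i)=\tfrac12$ on the prefactors. To complete the argument you would need to carry the prefactor commutation analysis through every adjacent-node case, quadratic ones included.
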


\begin{proof}\, We proceed in the same way.  First we have
$$\om'_j\,e_i\,{\om'}_j^{-1}=\Big( \prod\limits_{\substack{k\\
\pi(k)=j}}{\omega'}_{k}^{\infty}\Big)^{(\alpha_j,\,\alpha_j)}\,
 \sum\limits_{\substack{j\\ \pi(j)=i}} \Big( \prod\limits_{\substack{j'>j\\
\pi(j')=i}}{\omega}_{j'}^{\infty}\Big)^{(\alpha_i,\,\alpha_i)}e_j^{\infty}
\,\Big( \prod\limits_{\substack{k\\
\pi(k)=j}}{\omega'}_{k}^{\infty}\Big)^{-(\alpha_j,\,\alpha_j)}
$$

 We don't have to prove anything for $|i-j|\geqslant 2$ due to ${{\omega'}_i^{\infty}}
{e^{\infty}_j}={e^{\infty}_j} {{\omega'}_i^{\infty}} $.
 For $i=j$, we have
$e^{\infty}_m\,({{\omega'}_m^{\infty}})^{-1}=r^{-1}s\,({{\omega'}_m^{\infty}})^{-1}\,e^{\infty}_m$,
which follows from $\langle i,\,
i\rangle^{-1}=r^{-(\alpha_i,\,\alpha_i)}s^{(\alpha_i,\,\alpha_i)}$.
For $0 \leqslant i=j-1 \leqslant l-1$, applying
$e^{\infty}_m\,({\om'}^{\infty}_{m+1})^{-1}=s^{-1}\,({\om'}^{\infty}_{m+1})^{-1}\,e^{\infty}_m$
and $\langle i+1,\,
i\rangle^{-1}=s^{-(\alpha_{i+1},\,\alpha_{i+1})}$, we arrive at the required relation.
Finally, when $1 \leqslant i=j+1 \leqslant l $, we have
$e^{\infty}_m\,({\om'}^{\infty}_{m-1})^{-1}=r\,({\om'}^{\infty}_{m-1})^{-1}\,e^{\infty}_m$
and $\langle i-1,\,
i\rangle^{-1}=r^{(\alpha_{i-1},\,\alpha_{i-1})}$, and this  implies the conclusion.

For further reference, we need a few useful identities.

\begin{lemm} By direct calculations, we get the actions on $\mathscr{F}_n$,
\begin{eqnarray*}
&f_m^{\infty}\,({\om_{m'}^{\infty}})^{-1}=\langle m,
m'\rangle_{\infty}^{-1}({\om_{m'}^{\infty}})^{-1}\,f_m^{\infty},\\
&e_k^{\infty}\,({\om_{m'}^{\infty}})^{-1}=\langle k,
m'\rangle_{\infty}({\om_{m'}^{\infty}})^{-1}\,e_k^{\infty},\\
&f_m^{\infty}\,({{\om'}_{k'}^{\infty}})^{-1}=\langle m,
k'\rangle_{\infty}({{\om'}_{k'}^{\infty}})^{-1}\,f_m^{\infty}.
\end{eqnarray*}
where $\langle i,\,j\rangle_{\infty}$ is defined as follows:
 \begin{eqnarray*}\begin{array}{llll}
&\langle i,\, j\rangle_{\infty}=\left\{\begin{array}{cl} rs^{-1},&  i=j ;\\
 r^{-1},& i=j-1 ;\\
 s, & i=j+1;\\
 1, & otherwise.\\
\end{array}\right.
\end{array}
\end{eqnarray*}

\end{lemm}

Now we turn to the relation $(\hat{C}4)$. From definition and Lemma 3.3, it follows that
\begin{eqnarray*}
&&e_if_j-f_je_i\\
&=& \sum\limits_{\substack{k\\ \pi(k)=i}} \Big( \prod\limits_{\substack{k'>k\\
\pi(k')=i}}{\omega}_{k'}^{\infty}\Big)^{(\alpha_i,\,\alpha_i)}e_k^{\infty}\,\sum\limits_{\substack{m\\
\pi(m)=j}}f_m^{\infty}\Big(
\prod\limits_{\substack{m'<m\\
\pi(m')=j}}{{{\omega'}_{m'}^{\infty}}}\Big)^{(\alpha_j,\,\alpha_j)}\\
&&-\sum\limits_{\substack{m\\
\pi(m)=j}}f_m^{\infty}\Big(
\prod\limits_{\substack{m'<m\\
\pi(m')=j}}{{{\omega'}_{m'}^{\infty}}}\Big)^{(\alpha_j,\,\alpha_j)}\,
\sum\limits_{\substack{k\\ \pi(k)=i}} \Big( \prod\limits_{\substack{k'>k\\
\pi(k')=i}}{\omega}_{k'}^{\infty}\Big)^{(\alpha_i,\,\alpha_i)}e_k^{\infty}\\
&=&\sum\limits_{\substack{k, m\\ \pi(k)=i\\ \pi(m)=j}}\Big[ \Big(\prod\limits_{\substack{k'>k\\
\pi(k')=i}}{\omega}_{k'}^{\infty}\Big)^{(\alpha_i,\,\alpha_i)}e_k^{\infty}\,
 f_m^{\infty}\Big(
\prod\limits_{\substack{m'<m\\
\pi(m')=j}}{{{\omega'}_{m'}^{\infty}}}\Big)^{(\alpha_j,\,\alpha_j)}\\
&&- f_m^{\infty}\Big(
\prod\limits_{\substack{m'<m\\
\pi(m')=j}}{{{\omega'}_{m'}^{\infty}}}\Big)^{(\alpha_j,\,\alpha_j)}\,
\Big(\prod\limits_{\substack{k'>k\\
\pi(k')=i}}{\omega}_{k'}^{\infty}\Big)^{(\alpha_i,\,\alpha_i)}e_k^{\infty}\,
\Big]
\end{eqnarray*}
\begin{eqnarray*}
&=&\sum\limits_{\substack{k>m\\ \pi(k)=i\\ \pi(m)=j}} \Big(\prod\limits_{\substack{k'>k\\
\pi(k')=i}}{\omega}_{k'}^{\infty}\Big)^{(\alpha_i,\,\alpha_i)}\Big(
\prod\limits_{\substack{m'<m\\
\pi(m')=j}}{{{\omega'}_{m'}^{\infty}}}\Big)^{(\alpha_j,\,\alpha_j)}\,(e_k^{\infty}\,
 f_m^{\infty}- f_m^{\infty}\,e_k^{\infty})\\
&&+\delta_{i,j}\sum\limits_{\substack{k\\ \pi(k)=i}}\Big(\prod\limits_{\substack{k'>k\\
\pi(k')=i}}{\omega}_{k'}^{\infty}\Big)^{(\alpha_i,\,\alpha_i)}\Big(
\prod\limits_{\substack{k'<k\\
\pi(k')=j}}{{{\omega'}_{k'}^{\infty}}}\Big)^{(\alpha_j,\,\alpha_j)}\,(e_k^{\infty}\,
 f_k^{\infty}- f_k^{\infty}\,e_k^{\infty})\\
 &&+\sum\limits_{\substack{k<m\\ \pi(k)=i\\ \pi(m)=j}} \Big(\prod\limits_{\substack{k'>k\\
\pi(k')=i}}{\omega}_{k'}^{\infty}\Big)^{(\alpha_i,\,\alpha_i)}\Big(
\prod\limits_{\substack{m'<m\\
\pi(m')=j}}{{{\omega'}_{m'}^{\infty}}}\Big)^{(\alpha_j,\,\alpha_j)}\\
&&\hskip2cm\Big(\sum\limits_{m'<m}\langle k,
m'\rangle_{\infty}^{(\alpha_j,\,\alpha_j)}e_k^{\infty}\,
 f_m^{\infty}- \sum\limits_{k'>k}\langle
m,
k'\rangle_{\infty}^{(\alpha_i,\,\alpha_i)}f_m^{\infty}\,e_k^{\infty}\Big)
\end{eqnarray*}

Note that if $m=k+1$, then we have
$e_k^{\infty}\,f_m^{\infty}=0=f_m^{\infty}\,e_k^{\infty}$, and if
$m>k+1$, then
\begin{gather}
\sum\limits_{\substack{m'<m\\ \pi(k)=i\\ \pi(m)=j=\pi(m')}}\langle
k,
m'\rangle_{\infty}^{(\alpha_j,\,\alpha_j)}=\sum\limits_{\substack{k'>k\\ \pi(m)=j\\
\pi(k)=i=\pi(k')}}\langle m,
k'\rangle_{\infty}^{(\alpha_i,\,\alpha_i)}
\end{gather}

On $\mathscr{F}_n$, it is clear that
\begin{gather}
e_k^{\infty}\,
 f_m^{\infty}- f_m^{\infty}\,e_k^{\infty}=\delta_{k,m}
 \Big(\frac{(\om^{\infty}_k)^{(\alpha_i,\,\alpha_i)}-{\om'}^{\infty}_k)^{(\alpha_i,\,\alpha_i)}}{r_i-s_i} \Big)
\end{gather}
Consequently,  it follows that on $\mathscr{F}_n$,
\begin{eqnarray*}
&&e_if_j-f_je_i\\
&=&\delta_{i,j}(r_i-s_i)^{-1}\sum\limits_{\substack{k\\
\pi(k)=i}}\{\Big(\prod\limits_{k'\geqslant
k}{\omega}_{k'}^{\infty}\Big)^{(\alpha_i,\,\alpha_i)}\Big(
\prod\limits_{k'<k}{{{\omega'}_{k'}^{\infty}}}\Big)^{(\alpha_i,\,\alpha_i)}\\
&&-\Big(\prod\limits_{k'>
k}{\omega}_{k'}^{\infty}\Big)^{(\alpha_i,\,\alpha_i)}\Big(
\prod\limits_{k'\leqslant
k}{{{\omega'}_{k'}^{\infty}}}\Big)^{(\alpha_i,\,\alpha_i)}\}\\
&=&\delta_{i,j}(r_i-s_i)^{-1}\{
\Big(\prod\limits_{\substack{k\\
\pi(k)=i}}{\omega}_{k}^{\infty}\Big)^{(\alpha_i,\,\alpha_i)}-\Big(\prod\limits_{\substack{k\\
\pi(k)=i}}{\omega'}_{k}^{\infty}\Big)^{(\alpha_i,\,\alpha_i)}\}\\
&=&\delta_{i,j}\,\frac{\om_i-{\om'}_i}{r_i-s_i}.
 \end{eqnarray*}

It is straightforward to check the relation $(\hat{C}5)$,
\begin{eqnarray*}
e_le_0 &=&\sum\limits_{\substack{k\\ \pi(k)=l}} \Big( \prod\limits_{\substack{k'>k\\
\pi(k')=l}}{\omega}_{k'}^{\infty}\Big)^{(\alpha_l,\,\alpha_l)}e_k^{\infty}\,
\sum\limits_{\substack{m\\ \pi(m)=0}} \Big( \prod\limits_{\substack{m'>m\\
\pi(m')=0}}{\omega}_{m'}^{\infty}\Big)^{(\alpha_0,\,\alpha_0)}e_m^{\infty}\\
&=&r\,\sum\limits_{\substack{k, m\\ \pi(k)=l\\ \pi(m)=0}} \Big( \prod\limits_{\substack{k'>k\\
\pi(k')=l}}{\omega}_{k'}^{\infty}\Big)^{(\alpha_l,\,\alpha_l)}\,
 \Big( \prod\limits_{\substack{m'>m\\
\pi(m')=0}}{\omega}_{m'}^{\infty}\Big)^{(\alpha_0,\,\alpha_0)}e_k^{\infty}e_m^{\infty}\\
&=&rs\,\sum\limits_{\substack{k, m\\ \pi(k)=l\\ \pi(m)=0}} \Big( \prod\limits_{\substack{m'>m\\
\pi(m')=0}}{\omega}_{m'}^{\infty}\Big)^{(\alpha_0,\,\alpha_0)}e_m^{\infty}\,\Big( \prod\limits_{\substack{k'>k\\
\pi(k')=l}}{\omega}_{k'}^{\infty}\Big)^{(\alpha_l,\,\alpha_l)}
 e_k^{\infty}\\
 &=&rs\, e_0e_l.
 \end{eqnarray*}
The others relations can be proved  similarly.

The last task is to verify the Serre relations $(\hat{C}6)$ and $(\hat{C}7)$. Here we
only check the relation $(\hat{C}6)$ as the other relations are proved exactly in the same way.

To show the Serre relations $(\hat{C}6)$, let us begin with the following notation to save space.
\begin{gather}
p_j=\prod\limits_{\substack{j'>j\\
\pi(j')=\pi(j)}}{\omega}_{j'}^{\infty}\\
p'_j=\prod\limits_{\substack{j'>j\\
\pi(j')=\pi(j)}}{\omega'}_{j'}^{\infty}.
\end{gather}

Let us write $i \gg j$ if $i-j>2$. The following lemmas can be checked directly.

\begin{lemm}
For all $j$ and $k$, on $\mathscr{F}_n$ then we obtain,
\begin{gather}
 e_k^{\infty}\, e_k^{\infty}=0, \vspace*{3mm}\\
  e_k^{\infty}\, e_j^{\infty}\, e_k^{\infty}=0.
\end{gather}
\end{lemm}

\begin{lemm}
If $\pi(k)=0=\pi(j)$, then it holds
\begin{gather}
e_j^{\infty}\,p_k=\left\{\begin{array}{cl}p_k\,e_j^{\infty},
& \textit{for}\,\, j\leqslant k;\vspace*{2mm}\\
r^{-1}s\,p_k\,e_j^{\infty}, &\textit{for}\,\, j> k.
\end{array}\right.\\
e_j^{\infty}\,p'_k=\left\{\begin{array}{cl}p'_k\,e_j^{\infty}, &
\textit{for}\,\, j\leqslant k; \vspace*{2mm}\\
 rs^{-1}\,p'_k\,e_j^{\infty}, &\textit{for}\,\, j> k.
\end{array}\right.
\end{gather}
\end{lemm}

\begin{lemm}
If $\pi(j)=0, \pi(k)=1$, then it follows that
\begin{gather}
e_j^{\infty}\,p_k=\left\{\begin{array}{cl}p_k\,e_j^{\infty},
& \textit{for}\,\, j\leqslant k;\vspace*{2mm}\\
r\,p_k\,e_j^{\infty}, &\textit{for}\,\, j=k+1;\vspace*{2mm}\\
r^{2}\,p_k\,e_j^{\infty}, &\textit{for}\,\, j\gg k.
\end{array}\right.\\
e_j^{\infty}\,p'_k=\left\{\begin{array}{cl}p'_k\,e_j^{\infty},
& \textit{for}\,\, j\leqslant k;\vspace*{2mm}\\
s\,p'_k\,e_j^{\infty}, &\textit{for}\,\, j=k+1;\vspace*{2mm}\\
s^{2}\,p'_k\,e_j^{\infty}, &\textit{for}\,\, j \gg k.
\end{array}\right.
\end{gather}
\end{lemm}

\begin{lemm}
If $\pi(j)=1, \pi(k)=0$, then we have
\begin{gather}
e_j^{\infty}\,p_k=\left\{\begin{array}{cl}p_k\,e_j^{\infty},
& \textit{for}\,\, j\leqslant k \,\textit{or}\, j=k+1;\vspace*{2mm}\\
s^{-1}\,p_k\,e_j^{\infty}, &\textit{for}\,\, j \gg k.
\end{array}\right.\\
e_j^{\infty}\,p'_k=\left\{\begin{array}{cl}p'_k\,e_j^{\infty},
& \textit{for}\,\, j\leqslant k \,\textit{or}\, j=k+1;\vspace*{2mm}\\
r^{-1}\,p'_k\,e_j^{\infty}, &\textit{for}\,\, j \gg k.
\end{array}\right.
\end{gather}
\end{lemm}

\begin{lemm}
If $\pi(j)=1=\pi(k)$, it is easy to see that
\begin{gather}
e_j^{\infty}\,p_k=\left\{\begin{array}{cl}p_k\,e_j^{\infty},
& \textit{for}\,\, j\leqslant k ;\vspace*{2mm}\\
r^{-1}s\,p_k\,e_j^{\infty}, &\textit{for}\,\, j>k.
\end{array}\right.\\
e_j^{\infty}\,p'_k=\left\{\begin{array}{cl}p'_k\,e_j^{\infty},
& \textit{for}\,\, j\leqslant k ;\vspace*{2mm}\\
rs^{-1}\,p'_k\,e_j^{\infty}, &\textit{for}\,\, j>k.
\end{array}\right.
\end{gather}
\end{lemm}

We now prove the following Serre relation:
\begin{eqnarray}
e_0^{2}e_1+(r+s)e_0e_1e_0+rse_1e_0^2=0
 \end{eqnarray}
We first use definition to simply the left hand side (LHS) of $(3.20)$.
\begin{eqnarray*}
&&LHS\\&=&\sum\limits_{\substack{j, k, m\\ \pi(j)=0=\pi(k)\\
\pi(m)=1}}\Big[ \Big( \prod\limits_{\substack{j'>j\nonumber\\
\pi(j')=0}}{\omega}_{k'}^{\infty}\Big)e_j^{\infty}\,\Big( \prod\limits_{\substack{k'>k\\
\pi(k')=0}}{\omega}_{k'}^{\infty}\Big)e_k^{\infty}\, \Big( \prod\limits_{\substack{m'>m\\
\pi(m')=1}}{\omega}_{m'}^{\infty}\Big)^{\frac{1}{2}}e_m^{\infty}\nonumber \\
&&-(r+s)\,\Big( \prod\limits_{\substack{j'>j\\
\pi(j')=0}}{\omega}_{k'}^{\infty}\Big)e_j^{\infty}\,\Big( \prod\limits_{\substack{m'>m\\
\pi(m')=1}}{\omega}_{m'}^{\infty}\Big)^{\frac{1}{2}}e_m^{\infty}\,\Big( \prod\limits_{\substack{k'>k \\
\pi(k')=0}}{\omega}_{k'}^{\infty}\Big)e_k^{\infty}\nonumber \\
&&+\Big( \prod\limits_{\substack{m'>m\\
\pi(m')=1}}{\omega}_{m'}^{\infty}\Big)^{\frac{1}{2}}e_m^{\infty}\,\Big( \prod\limits_{\substack{j'>j\\
\pi(j')=0}}{\omega}_{k'}^{\infty}\Big)e_j^{\infty}\,\Big( \prod\limits_{\substack{k'>k\\
\pi(k')=0}}{\omega}_{k'}^{\infty}\Big)e_k^{\infty}\Big]\nonumber \\
&=&\Big(\sum\limits_{m\gg j>k}+\sum\limits_{m=j+1\gg
k}+\sum\limits_{j\gg m\gg k}+\sum\limits_{j\gg
m=k+1}+\sum\limits_{j=m+1>k}+\sum\limits_{j>k\gg
m}+\sum\limits_{j>k=m+1}\Big)\nonumber \\
&&\times
\{p_je_j^{\infty}\,p_ke_k^{\infty}\,p_m^{\frac{1}{2}}e_m^{\infty}+p_ke_k^{\infty}\,p_je_j^{\infty}\,p_m^{\frac{1}{2}}e_m^{\infty}\nonumber\\
&&-(r+s)(p_je_j^{\infty}\,p_m^{\frac{1}{2}}e_m^{\infty}\,p_ke_k^{\infty}+p_ke_k^{\infty}\,p_m^{\frac{1}{2}}e_m^{\infty}\,p_me_m^{\infty})\nonumber\\
&&+(rs)(p_m^{\frac{1}{2}}e_m^{\infty}\,p_je_j^{\infty}\,p_ke_k^{\infty}+p_m^{\frac{1}{2}}e_m^{\infty}\,p_ke_k^{\infty}\,p_je_j^{\infty})\}.\nonumber
 \end{eqnarray*}
Using Lemma 3.4 through Lemma 3.8, we would like to show that each summand is actually 0. Taking the second summand for
example, we get immediately,
\begin{eqnarray*}
&&\sum\limits_{m=j+1\gg k}
\{p_je_j^{\infty}\,p_ke_k^{\infty}\,p_m^{\frac{1}{2}}e_m^{\infty}+p_ke_k^{\infty}\,p_je_j^{\infty}\,p_m^{\frac{1}{2}}e_m^{\infty}\\
&&-(r+s)(p_je_j^{\infty}\,p_m^{\frac{1}{2}}e_m^{\infty}\,p_ke_k^{\infty}+p_ke_k^{\infty}\,p_m^{\frac{1}{2}}e_m^{\infty}\,p_je_j^{\infty})\\
&&+(rs)(p_m^{\frac{1}{2}}e_m^{\infty}\,p_je_j^{\infty}\,p_ke_k^{\infty}+p_m^{\frac{1}{2}}e_m^{\infty}\,p_ke_k^{\infty}\,p_je_j^{\infty})\}\\
&=&\sum\limits_{m=j+1>k}\{(r^{-1}s+1-r^{-1}(r+s))p_jp_kp_m^{\frac{1}{2}}e_j^{\infty}e_m^{\infty}e_k^{\infty}\\
&&\hskip1cm+(-(r+s)s^{-1}+(rs)(r^{-1}s^{-1}+s^{-2}))
p_jp_kp_m^{\frac{1}{2}}e_m^{\infty}e_j^{\infty}e_k^{\infty} \}\\
&=&0.
 \end{eqnarray*}
The other summands are seen as zero by the same method. Subsequently Relation $(3.20)$ has been verified.

Next we turn to the relation
\begin{eqnarray}
e_{1}^3e_0-(r^{-1}{+}(rs)^{-\frac{1}{2}}{+}s^{-1})\,e_{1}^2e_0e_{1}+
(rs)^{-\frac{1}{2}}\times\nonumber\\
(r^{-1}{+}(rs)^{-\frac{1}{2}}{+}s^{-1})\,e_{1}e_0e_{1}^2-(rs)^{-\frac{3}{2}}\,e_0e_{1}^3=0.
\end{eqnarray}
Note that by definition,  the left hand side (LHS) of $(3.21)$ is equal to
\begin{eqnarray*}
LHS&=&\sum\limits_{\substack{i,j, k, m\\ \pi(i)=\pi(j)=\pi(k)=1\\
\pi(m)=0}}\Big[p_i^{\frac{1}{2}}e_i^{\infty}\,p_j^{\frac{1}{2}}e_j^{\infty}\,p_k^{\frac{1}{2}}e_k^{\infty}\,
p_me_m^{\infty}\\
&&-(r^{-1}{+}(rs)^{-\frac{1}{2}}{+}s^{-1})\,p_i^{\frac{1}{2}}e_i^{\infty}\,p_j^{\frac{1}{2}}e_j^{\infty}
\,p_me_m^{\infty}\,p_k^{\frac{1}{2}}e_k^{\infty}\\
&&+(rs)^{-\frac{1}{2}}\,
(r^{-1}{+}(rs)^{-\frac{1}{2}}{+}s^{-1})\,p_i^{\frac{1}{2}}e_i^{\infty}\,p_me_m^{\infty}\,p_j^{\frac{1}{2}}e_j^{\infty}
\,p_k^{\frac{1}{2}}e_k^{\infty}\\
&&-(rs)^{-\frac{3}{2}}\,p_me_m^{\infty}\,p_i^{\frac{1}{2}}e_i^{\infty}\,p_j^{\frac{1}{2}}e_j^{\infty}
\,p_k^{\frac{1}{2}}e_k^{\infty} \Big]
\end{eqnarray*}
Applying Lemma 3.4 through Lemma 3.8, the last relation becomes,
\begin{eqnarray*}
LHS&=&\Big(\sum\limits_{m\gg j>k>i}+\sum\limits_{m=j+1>
k>i}+\sum\limits_{j\gg m\gg k\gg i}+\sum\limits_{j\gg
m=k+1>i}\\
&&+\sum\limits_{j=m+1\gg k>i}+\sum\limits_{j>k\gg m\gg
i}+\sum\limits_{j>k\gg
m=i+1}+\sum\limits_{j>k=m+1>i}\\
&&+\sum\limits_{j>k\gg i\gg m}+\sum\limits_{j>k>i=m+1}\Big)
\{\Big(p_i^{\frac{1}{2}}e_i^{\infty}\,p_j^{\frac{1}{2}}e_j^{\infty}\,p_k^{\frac{1}{2}}e_k^{\infty}\,
p_me_m^{\infty}\\
&&+p_i^{\frac{1}{2}}e_i^{\infty}\,p_k^{\frac{1}{2}}e_k^{\infty}\,p_j^{\frac{1}{2}}e_j^{\infty}\,
p_me_m^{\infty}+p_j^{\frac{1}{2}}e_j^{\infty}\,p_k^{\frac{1}{2}}e_k^{\infty}\,p_i^{\frac{1}{2}}e_i^{\infty}\,
p_me_m^{\infty}\\
&&+p_j^{\frac{1}{2}}e_j^{\infty}\,p_i^{\frac{1}{2}}e_i^{\infty}\,p_k^{\frac{1}{2}}e_k^{\infty}\,
p_me_m^{\infty}+p_k^{\frac{1}{2}}e_k^{\infty}\,p_i^{\frac{1}{2}}e_i^{\infty}\,p_j^{\frac{1}{2}}e_j^{\infty}\,
p_me_m^{\infty}\\
&&+p_k^{\frac{1}{2}}e_k^{\infty}\,p_j^{\frac{1}{2}}e_j^{\infty}\,p_i^{\frac{1}{2}}e_i^{\infty}\,
p_me_m^{\infty} \Big)-(r^{-1}{+}(rs)^{-\frac{1}{2}}{+}s^{-1})\,\\
&&\Big(p_i^{\frac{1}{2}}e_i^{\infty}\,p_j^{\frac{1}{2}}e_j^{\infty}\,
p_me_m^{\infty}\,p_k^{\frac{1}{2}}e_k^{\infty}+p_i^{\frac{1}{2}}e_i^{\infty}\,p_k^{\frac{1}{2}}e_k^{\infty}\,
p_me_m^{\infty}\,p_j^{\frac{1}{2}}e_j^{\infty}\\
&&+p_j^{\frac{1}{2}}e_j^{\infty}\,p_k^{\frac{1}{2}}e_k^{\infty}\,
p_me_m^{\infty}\,p_i^{\frac{1}{2}}e_i^{\infty}+p_j^{\frac{1}{2}}e_j^{\infty}\,p_i^{\frac{1}{2}}e_i^{\infty}\,
p_me_m^{\infty}\,p_k^{\frac{1}{2}}e_k^{\infty}\\
&&+p_k^{\frac{1}{2}}e_k^{\infty}\,p_i^{\frac{1}{2}}e_i^{\infty}\,
p_me_m^{\infty}\,p_j^{\frac{1}{2}}e_j^{\infty}+p_k^{\frac{1}{2}}e_k^{\infty}\,p_j^{\frac{1}{2}}e_j^{\infty}\,
p_me_m^{\infty}\,p_i^{\frac{1}{2}}e_i^{\infty}
\Big)\\
&&+(rs)^{-\frac{1}{2}}\,
(r^{-1}{+}(rs)^{-\frac{1}{2}}{+}s^{-1})\,\Big(p_i^{\frac{1}{2}}e_i^{\infty}\,
p_me_m^{\infty}\,p_j^{\frac{1}{2}}e_j^{\infty}\,p_k^{\frac{1}{2}}e_k^{\infty}\\
&&+p_i^{\frac{1}{2}}e_i^{\infty}\,
p_me_m^{\infty}\,p_k^{\frac{1}{2}}e_k^{\infty}\,p_j^{\frac{1}{2}}e_j^{\infty}+p_j^{\frac{1}{2}}e_j^{\infty}\,
p_me_m^{\infty}\,p_k^{\frac{1}{2}}e_k^{\infty}\,p_i^{\frac{1}{2}}e_i^{\infty}\\
&&+p_j^{\frac{1}{2}}e_j^{\infty}\,p_me_m^{\infty}\,p_i^{\frac{1}{2}}e_i^{\infty}\,
p_k^{\frac{1}{2}}e_k^{\infty}+p_k^{\frac{1}{2}}e_k^{\infty}\,p_me_m^{\infty}\,p_i^{\frac{1}{2}}e_i^{\infty}\,
p_j^{\frac{1}{2}}e_j^{\infty}\\
&&+p_k^{\frac{1}{2}}e_k^{\infty}\,p_me_m^{\infty}\,p_j^{\frac{1}{2}}e_j^{\infty}\,
p_i^{\frac{1}{2}}e_i^{\infty}
\Big)-(rs)^{-\frac{3}{2}}\,\Big(p_me_m^{\infty}\,p_i^{\frac{1}{2}}e_i^{\infty}\,
p_j^{\frac{1}{2}}e_j^{\infty}\,p_k^{\frac{1}{2}}e_k^{\infty}\\
&&+p_me_m^{\infty}\,p_i^{\frac{1}{2}}e_i^{\infty}\,
p_k^{\frac{1}{2}}e_k^{\infty}\,p_j^{\frac{1}{2}}e_j^{\infty}+p_me_m^{\infty}\,p_j^{\frac{1}{2}}e_j^{\infty}\,
p_k^{\frac{1}{2}}e_k^{\infty}\,p_i^{\frac{1}{2}}e_i^{\infty}\\
&&+p_me_m^{\infty}\,p_j^{\frac{1}{2}}e_j^{\infty}\,p_i^{\frac{1}{2}}e_i^{\infty}\,
p_k^{\frac{1}{2}}e_k^{\infty}+p_me_m^{\infty}\,p_k^{\frac{1}{2}}e_k^{\infty}\,p_i^{\frac{1}{2}}e_i^{\infty}\,
p_j^{\frac{1}{2}}e_j^{\infty}\\
&&+p_me_m^{\infty}\,p_k^{\frac{1}{2}}e_k^{\infty}\,p_j^{\frac{1}{2}}e_j^{\infty}\,
p_i^{\frac{1}{2}}e_i^{\infty} \Big) \}=0.
\end{eqnarray*}
Every summand of the last relation can be shown to be 0 as before.

Finally we check that Serre relation involving in $i$ and $i+1$ holds in the Fock space.
For $1\leqslant i \leqslant l-2$, we compute that
\begin{eqnarray*}
&&e_i^2e_{i+1}-(r_i{+}s_i)\,e_ie_{i+1}e_i+(r_is_i)\,e_{i+1}e_i^2\\
&=&\sum\limits_{\substack{j, k, m\\ \pi(j)=i=\pi(k)\\ \pi(m)=i+1}}
\{p_je_j^{\infty}\,p_ke_k^{\infty}\,p_m^{\frac{1}{2}}e_m^{\infty}+p_ke_k^{\infty}\,p_je_j^{\infty}\,p_m^{\frac{1}{2}}e_m^{\infty}\\
&&-(r_i+s_i)(p_je_j^{\infty}\,p_m^{\frac{1}{2}}e_m^{\infty}\,p_ke_k^{\infty}+p_ke_k^{\infty}\,p_m^{\frac{1}{2}}e_m^{\infty}\,p_je_j^{\infty})\\
&&+(r_is_i)(p_m^{\frac{1}{2}}e_m^{\infty}\,p_je_j^{\infty}\,p_ke_k^{\infty}+p_m^{\frac{1}{2}}e_m^{\infty}\,p_ke_k^{\infty}\,p_je_j^{\infty})\}
\end{eqnarray*}
\begin{eqnarray*}
&=&\Big(\sum\limits_{m\gg j>k}+\sum\limits_{m=j+1\gg
k}+\sum\limits_{j\gg m\gg k}+\sum\limits_{j\gg
m=k+1}+\sum\limits_{j=m+1>k}\\
&&+\sum\limits_{j>k\gg
m}+\sum\limits_{j>k=m+1}\Big)\{p_je_j^{\infty}\,p_ke_k^{\infty}\,p_m^{\frac{1}{2}}e_m^{\infty}+p_ke_k^{\infty}\,p_je_j^{\infty}\,p_m^{\frac{1}{2}}e_m^{\infty}\\
&&-(r_i+s_i)(p_je_j^{\infty}\,p_m^{\frac{1}{2}}e_m^{\infty}\,p_ke_k^{\infty}+p_ke_k^{\infty}\,p_m^{\frac{1}{2}}e_m^{\infty}\,p_je_j^{\infty})\\
&&+(r_is_i)(p_m^{\frac{1}{2}}e_m^{\infty}\,p_je_j^{\infty}\,p_ke_k^{\infty}+p_m^{\frac{1}{2}}e_m^{\infty}\,p_ke_k^{\infty}\,p_je_j^{\infty})\}=0.
\end{eqnarray*}
Therefore we have finished the proof of Theroem 3.2.
\end{proof}

\vskip30pt \centerline{\bf ACKNOWLEDGMENT}

\bigskip

Jing thanks the support of
Simons Foundation grant 198129, NSFC grant 11271138 and NSF grants
1014554 and 1137837. H. Zhang would
like to thank the support of NSFC grant 11371238.

\bigskip

\bibliographystyle{amsalpha}

\begin{thebibliography}{99}

\medskip



\bibitem{BGH1} N. Bergeron, Y. Gao and N. Hu, {\em Drinfel'd doubles and Lusztig's symmetries
 of two-parameter quantum groups}, J. Algebra, {\bf 301}, (2006), 378-405.

\bibitem{BGH2}  N. Bergeron, Y. Gao and N. Hu, {\em Representations of
two-parameter quantum orthogonal and symplectic groups}, In Studies in Advanced Mathematics, Vol 39. Providence: AMS/IP, {\bf 39} (2007), 1-21.

\bibitem{BW1} G. Benkart and S. Witherspoon,
{\em Two-parameter quantum groups and Drinfel'd doubles}, Alg.
Rep. Theory, {\bf 7} (2004), 261-286.

\bibitem{BW2} G. Benkart and S. Witherspoon,
{\em Representatons of two-parameter quantum groups and
Schur-Weyl duality},  In Bergen J, Catoiu S, Chin W, eds. Hopf Algebras.
 Lecture Notes in Pure and Appl Math, 237. New York: Marcel Dekker, (2004), 65-92.

\bibitem{BW3} G. Benkart and S. Witherspoon, {\em Restricted two-parameter quantum groups (of type A)}.
Representations of Finite Dimensional Algebras and Related Topics in Lie Theory and Geometry.
Fields Institute Communications, Vol 40. Providence: Amer Math Soc, (2004), 293-318.

\bibitem{Dr} V.~G.~Drinfel'd, {\em Quantum groups}, ICM-Berkeley Proceedings, New York, 1986,
pp.798-820.

\bibitem{FJ} I.~B.~Frenkel and N. Jing, {\em Vertex representations of quantum affine algebras},
Proc. Nat'l. Acad. Sci.
USA. {\bf 85} (1998), 9373-9377.

\bibitem{GHZ} Y. Gao,  N. Hu and H. Zhang, {\em Two-parameter Quantum Affine algebra of Type ${\mathrm G_2^{(1)}}$, Drinfeld realization and Vertex Representation}, J. Math. Phys., {\bf 56} (2015), 011704.

\bibitem{H} T. Hayashi,  {\em $q$-analogues of Clifford and Weyl algebras--spinor
and oscillator representations of quantum enveloping algebras}.
Comm. Math. Phys., {\bf 127} (1990), 129-144.

\bibitem{HS} N. Hu and Q. Shi, {\em The two-parameter quantum group of exceptional type
$G_2$ and Lusztig's symmetries},  Pacific J.
Math.,{\bf 230} (2007), 327-345.

\bibitem{HRZ} N. Hu, M. Rosso and H. Zhang, {\em Two-parameter quantum affine algebra $U_{r,s}(\widehat{\frak
{sl}_n})$, Drinfel'd realization and quantum affine Lyndon basis},
Comm. Math. Phys., {\bf 278} (2008), 453-486.

\bibitem{HZ1} N. Hu and H. Zhang, \textit{Vertex reprsentations of two-parameter quantum affine algebras: the
simply-laced cases}, Preprint, 2013

\bibitem{JMMO} M. Jimbo, K.~C.~Misra,  T. Miwa and M. OKado,  {\em Combinatorics of Representations of
$U_q(\hat{\frak{sl}_n})$ at $q=0$}, Comm. Math. Phys.,
{\bf 136} (1991), 543-566.

\bibitem{Jb} M. Jimbo  {\em A q-difference analogue of $U(g)$ and the Yang-Baxter equation}, Lett.
Math. Phys. {\bf 10} (1985), 63-69.


\bibitem{JZ1} N. Jing and H. Zhang, {\em Fermionic realization of two-parameter quantum affine algebra
$U_{r,s}(\hat{{sl}_n})$}, Lett. Math. Phys., {\bf 89} (2009), 159-170.

\bibitem{JZ2}  N. Jing and H. Zhang, {\em Two-parameter quantum vertex representations
via finite groups and the McKay correspondence}, Trans. AMS.,  {\bf 363} (2011), 3769-3797.

\bibitem{KMM} S. Kang, K. Misra and  T. Miwa, {\em Fock space representations of the quantized universal enveloping algebras
$U_q(C_l^{(1)})$, $U_q(A_{2l}^{(2)})$ and $U_q(D_{l+1}^{(2)})$ }, J.
Algebra, {\bf 155} (1993), 238-251.


\bibitem{LT} B. Leclerc and J.-Y. Thibon, {\em Canonical bases of q-deformed Fock
spaces}, Int. Math. Res. Notices, {\bf 9} (1996), 447-456.


\bibitem{MM} K.~C.~Misra and T. Miwa, {\em
Crystal base for the basic representation of $U_q(\hat{sl}(n))$}.
Comm. Math. Phys., {\bf 134} (1990), 79-88.

\bibitem{T} M. Takeuchi, {\em A two-parameter quantization of
$GL(n)$,} Proc. Japan Acad., {\bf 66} (1990), 112-114.



\end{thebibliography}

\end{document}